\newtheorem{theorem}{Theorem}[section]
\newtheorem{proof}[theorem]{Proof}
\newcommand{\degree}{\,^{\circ}}
\newcommand{\R}{\mathbb{R}}
\newcommand{\beq}{\begin{equation}}
\newcommand{\eeq}{\end{equation}}
\begin{document}
\begin{frontmatter}

\journal{}
\title{Structure preserving integrators for solving linear quadratic
optimal control problems with applications to describe the flight of a quadrotor}

\author{Philipp Bader} %
\ead{phiba@imm.upv.es}
\author{Sergio Blanes} %
\ead{serblaza@imm.upv.es}
\author{{Enrique Ponsoda}%
} \ead{eponsoda@imm.upv.es}
\address{
Instituto de Matem\'{a}tica Multidisciplinar, Building 8G, second
floor,\\ Universitat Polit\`{e}cnica de
Val\`{e}ncia. 46022 Valencia, Spain.}

\begin{abstract}

 We present structure preserving integrators for
solving linear quadratic optimal control problems.
This problem requires the numerical integration of matrix Riccati
differential equations whose exact solution is a symmetric
positive definite time-dependent matrix which controls the
stability of the equation for the state. This property is not
preserved, in general, by the numerical methods. We propose second
order exponential methods based on the Magnus series expansion
which unconditionally preserve positivity for this problem and
analyze higher order Magnus integrators. This method can also be
used for the integration of nonlinear problems if they are
previously linearized. 
The performance of the algorithms is illustrated with the stabilization of a quadrotor which is an unmanned aerial vehicle.
\end{abstract}

\begin{keyword}
Optimal control \sep linear quadratic methods \sep matrix Riccati differential equation \sep second order exponential integrators
\MSC 49J15 \sep 49N10 \sep 34A26
\end{keyword}
\end{frontmatter}

\section{Introduction}

Nonlinear control problems have attracted the interest of researchers in different fields, e.g., the control of airplanes, helicopters, satellites, etc. \cite{BouabdallahTesis,budiyono07otc,castillo} during the last years.
While the extensively studied linear quadratic optimal control (LQ) problems can be used for solving simplified models, most realistic problems are inherently nonlinear.
Furthermore, nonlinear control theory can improve the performance of the controller and enable tracking of aggressive trajectories \cite{castillo05soa}.

Solving nonlinear optimal control problems requires the numerical
integration of systems of coupled non-au\-ton\-o\-mous and nonlinear
equations with boundary conditions for which it is of great interest
to have simple, fast, accurate and reliable numerical algorithms
for real time integrations.

It is usual to solve the nonlinear problems by linearization,
which leads to problems that are solvable by linear quadratic methods.%
In general, they %
require the integration of matrix Riccati
differential equations (RDEs) iteratively.%
The algebraic structure of the RDEs appearing in this problem implies that their solutions are symmetric positive definite matrices,
a property that plays an important role for the qualitative and quantitative solutions of both the control and the state vector.

Geometric numerical integrators are numerical algorithms which
preserve most of the qualitative properties of the exact solution.
However, the mentioned positivity of the solution in this problem
is a qualitative property which is not unconditionally preserved
by most methods, geometric integrators included.

We show that some low order exponential integrators
unconditionally preserve this property, and higher order methods
preserve it under mild constraints on the time step. We refer to
these methods as structure preserving integrators, and they will
allow the use of relatively large time steps while showing a high
performance for stiff problems or problems which strongly vary
along the time evolution.

The aforementioned nonlinearities in the control problems
can be dealt with in different ways.
We consider three techniques 
to linearize the equations and the linear equations are then
numerically solved using some exponential integrators which
preserve the relevant properties of the solution. Since the
nonlinear problems are solved by linearization, we first examine
the linear problem in detail.

The paper is organized as follows: The linear case is studied in Section~\ref{linear}, where we emphasize on the algebraic structure of the equations and the qualitative properties of the solutions.
We next consider some exponential integrators and analyze the preservation of the qualitative properties of the
solution by the proposed methods.
In Section~\ref{nonlinear}, it is shown how the full nonlinear problem can - after linearization - be treated as a particular case of the non-autonomous linear one.
The work concludes with the application of the numerical algorithm to a particular example (control of a quadrotor) in Section~\ref{quadrotor}, with which the accuracy of the exponential methods is demonstrated. Numerical results and conclusions are included.

\section{Linear quadratic (LQ) methods in optimal control problems}
\label{linear}
Let us consider the general LQ optimal control problem
\begin{subequations}\label{eq:lqproblem}\begin{align}
    \label{eq:cost}
    \min_{u\in L^2} \int_{0}^{t_{f}}
                \left( X^{T}(t)Q(t)X(t)+ u^{T}(t)R(t)u(t) \right) \, dt\\
    \label{eq:control1}
    \text{subject to}\quad \dot X(t) = A(t)X(t) + B(t)u(t), \quad X(0)=X_{0},
\end{align}
\end{subequations}
where $\dot X(t)$ is the time-derivative of the state vector
$X(t)\in\R^n$, $u(t)\in\R^m$ is the control, $R(t)\in \R^{m\times
m}$ is symmetric and positive definite, $Q(t)\in\R^{n\times n}$ is symmetric
positive semi-definite, $A\in\R^{n\times n}, B\in\R^{n\times m}$
and $M^{T}$ denotes the transpose of a matrix $M$.

Problems of the type (\ref{eq:lqproblem}) are frequent in many
areas, such as game theory, quantum mechanics, economy, environment
problems, etc., see \cite{jcam,engwerda}, or in engineering models
\cite[ch. 5]{libro}.

The optimal control problem \eqref{eq:lqproblem} is solved,
assuming some controllability conditions, by the linear feedback
controller \cite{Kirk04}
\begin{equation}\label{eq:linear:opt_control}
    u(t) = - K(t) X(t),
\end{equation}
with the gain matrix
\begin{displaymath}
    K(t) \, = \, R^{-1}(t) B^{T}(t)  P(t)\, ,
 \end{displaymath}
and $P(t)$ verifying the matrix RDE
\begin{equation}\label{eq:RDE}
 \dot{P}(t)=-P(t)A(t) -A^{T}(t)P(t) +P(t)B(t)R^{-1}(t)B^{T}(t)P(t) -Q(t),
\end{equation}
with the final condition $P(t_f)=0$. The solution $P(t)$ after
backward time integration of \eqref{eq:RDE} is a symmetric and
positive definite matrix when both $Q(t)$ and $R(t)$ are symmetric
positive definite \cite{abou03mre} (similar results also apply for
the weaker condition $Q(t)$ positive semidefinite under general
conditions on the matrices which make the system
stabilizable and detectable). To compute the optimal control
$u(t)$, we solve for $P(t)$, and plugging the control law into
\eqref{eq:control1} yields a linear equation for the state vector
 \begin{displaymath}
     \dot{X} (t)= \left( A(t)- B(t)R^{-1}(t)B^{T}(t)P(t) \right) X(t),\quad X(0)=X_{0},
 \end{displaymath}
to be integrated forward in time, with which the control is readily computed.
Notice that $ S(t)=B(t)R^{-1}(t)B^{T}(t)$ is a positive
semi-definite symmetric matrix (positive definite if $ \ rank \, B
= n $) and $P(t)$ is a positive definite matrix, hence its product
is also a positive semi-definite matrix, and this is very
important for the stability of the solution for the state vector
and ultimately for the control. A numerical integrator which does
not preserve the positivity of $P(t)$ can become unstable when
solving the state vector.

In this paper, exponential integrators, which belong to the class of Lie group methods (see \cite{applnum,IMNZ} and references therein), are proposed in order to solve the RDE \eqref{eq:RDE}.
They are geometric integrators
because they preserve some of the qualitative properties of the exact solution.

\subsection{Structure preserving integrators}
We are interested in numerical integrators which preserve the
symmetry as well as the positivity of $P(t)$. While symmetry is a
property preserved by most
methods, the preservation of positivity is a more challenging
task.

For our analysis, it is convenient to review some results from the
numerical integration of differential equations. 
Given the
ordinary differential equation (ODE)
\begin{equation}\label{eq:ODE-t-dep}
  \dot x=f(x,t), \qquad x(t_0)=x_0\in\mathbb{R}^n,
\end{equation}
the exact solution at time $t=t_0+h$ can formally be written as a
map that takes initial conditions to solutions,
$\Phi_h(x_0)=x(t_0+h)$. For sufficiently small $h$, it can also be
interpreted as the exact solution at time $t=t_0+h$ of an
autonomous ODE
\[
 \dot x=f_h(x), \qquad x(t_0)=x_0,
\]
where $f_h$ is the vector field associated to the Lie operator 
$\ %
\frac{1}{h}\log(\Phi_h)$.

In a similar way, a numerical integrator for solving the equation
(\ref{eq:ODE-t-dep}) which is used with a time step $h$, can be
seen as the exact solution at time $t=t_0+h$ of a perturbed
problem (backward error analysis)
\[
 \dot x=\tilde f_h(x), \qquad x(t_0)=x_0,
\]
and we say that the method is of order $p$ if $\tilde f_h- f_h=\mathcal{O}(h^{p+1})$. 
The qualitative properties of the exact solution
$\Phi_h$ are related to the algebraic structure of the vector
field $f_h$: If the vector field $\tilde f_h$ associated with
the numerical integrator shares the same algebraic structure, the
numerical integrator will preserve these qualitative properties.

Given the RDE
\begin{displaymath}
 \dot{P}=PA(t) +A^{T}(t)P -PS(t)P +Q(t), \qquad P(t_0)=0,
\end{displaymath}
which is equivalent to (\ref{eq:RDE}) with the sign of the time
changed, i.e., integrated backward in time, and with $Q(t),S(t)$
symmetric and positive definite matrices, then $P(t)$, for
$t>t_0$, is also a symmetric and positive definite matrix. Thus, a
numerical integrator which can be considered as the exact solution
of a perturbed matrix RDE
\begin{displaymath}
 \dot{P}=P \tilde A_h +\tilde A^{T}_hP -P\tilde S_hP +\tilde Q_h, \qquad P(t_0)=0,
\end{displaymath}
where $\tilde Q_h,\tilde S_h$ are symmetric positive definite
matrices will preserve the symmetry and positivity of the exact
solution.
The same result applies if the  numerical integrator is given by a
composition of maps such that each one, separately, can be seen as
the exact solution of a matrix RDE with the same structure.

We will refer to these methods as
\textit{positivity preserving integrators}. If a method preserves
positivity for all $h>0$, we say it is \textit{unconditionally
positivity preserving} and, if there exists $h^*>0$ such that this
property is preserved for $0<h<h^*$, we will refer to it as
\textit{conditionally positive preserving}.

In general, standard methods do not preserve positivity. We show,
however, that some second order exponential integrators preserve
positivity unconditionally and higher order ones are
conditionally positivity preserving for a relatively large range
of values of $h^*$ which depends on the smoothness in the time
dependence of the matrices $A(t),S(t),Q(t)$.
At this stage, it is convenient to rewrite the RDE (\ref{eq:RDE}) as
a linear differential equation
\begin{equation}\label{identification}
 \frac{d}{dt}\begin{bmatrix} V(t) \\ W(t) \end{bmatrix}
 = \begin{bmatrix}
      -A(t)^{T}  & - Q(t) \\
      - S(t)     &   A(t)
    \end{bmatrix}\,
  \begin{bmatrix} V(t) \\ W(t) \end{bmatrix}, \quad
    \begin{bmatrix} V(t_f) \\ W(t_f) \end{bmatrix} = \begin{bmatrix}P_f \\ I \end{bmatrix},
\end{equation}
where $P_f=0$, $S(t)=B(t) R^{-1} (t) B^{T} (t)$ and  the solution
$P(t)$ of problem \eqref{eq:RDE}, to be integrated backward in time,
is given by
\begin{displaymath}
        P(t)= V(t) W(t)^{-1}, \quad
        P(t), V(t) , W(t) \in
\mathbb{R}^{n \times n },
\end{displaymath}
in the region where $ W(t) $ is invertible (see, for instance, \cite{applnum} or
\cite{JP95}, and references therein). If $R(t)$ and $Q(t)$ are positive
definite matrices, this problem always has a solution.

It is then clear that if a numerical integrator for the equation
(\ref{identification}) can be regarded as the exact solution of an
autonomous perturbed linear equation
\begin{equation*}\label{identification2}
 \frac{d}{dt}\begin{bmatrix} V(t) \\ W(t) \end{bmatrix}
 = \begin{bmatrix}
      -\tilde A_h^{T}  & - \tilde Q_h \\
      - \tilde S_h     &   \tilde A_h
    \end{bmatrix}\,
  \begin{bmatrix} V(t) \\ W(t) \end{bmatrix}, \quad
    \begin{bmatrix} V(t_f) \\ W(t_f) \end{bmatrix}
    = \begin{bmatrix}P_f \\ I \end{bmatrix},
\end{equation*}
where $\tilde Q_h$ and $\tilde S_h$ are symmetric and positive
definite matrices, then the numerical solution is symmetric and
positive definite.

In general, high order standard methods like Runge-Kutta methods
do not preserve positivity. Explicit methods applied to the linear
problem do not preserve positivity unconditionally, but to show
this result for implicit methods requires a more detailed
analysis and it is stated in the following theorem.

\begin{theorem}
 The second order implicit midpoint and trapezoidal Runge-Kutta
methods do not preserve positivity unconditionally for the
solution of the RDE (\ref{identification}).
\end{theorem}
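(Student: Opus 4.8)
The plan is to exhibit a counterexample: a concrete choice of matrices $A,S,Q$ and a time step $h$ for which the midpoint and trapezoidal methods produce a numerical $P$-step that is not positive definite (or, equivalently via the backward-error framework of the excerpt, whose associated perturbed coefficients $\tilde S_h$ or $\tilde Q_h$ fail to be positive definite). The simplest setting is the scalar case $n=m=1$, where positivity of $P$ becomes positivity of a single number and everything reduces to elementary algebra.

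First I would write down the one-step map of each method applied to the linear block form~\eqref{identification}. Both the implicit midpoint and the trapezoidal rule are rational (Pad\'e-type) approximations to the matrix exponential: for the constant-coefficient linear system $\dot y = My$ with $M=\begin{bmatrix}-A^{T} & -Q\\ -S & A\end{bmatrix}$, midpoint gives the one-step propagator $\left(I-\tfrac{h}{2}M\right)^{-1}\left(I+\tfrac{h}{2}M\right)$ and trapezoidal gives $\left(I-\tfrac{h}{2}M\right)^{-1}\left(I+\tfrac{h}{2}M\right)$ as well for the linear case, so I would then transport this back to the Riccati variable through $P=VW^{-1}$ with initial data $\begin{bmatrix}P_f\\ I\end{bmatrix}=\begin{bmatrix}0\\ I\end{bmatrix}$. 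This yields an explicit rational expression for the numerical $P$ after one step as a function of $h$ and the entries $A,S,Q$.

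Next I would choose the coefficients so that the resulting scalar $P$-value, or the effective $\tilde S_h$, $\tilde Q_h$ read off from it, becomes negative for some admissible $h$. Because these integrators are only second-order accurate, their rational approximation deviates from the true exponential at $\mathcal{O}(h^{3})$; the strategy is to pick $A$, $S$, $Q$ (with $S,Q>0$) and push $h$ large enough that a denominator factor such as $1-\tfrac{h}{2}A$ or a combination like $1-\tfrac{h^{2}}{4}(A^{2}+SQ)$ changes sign, driving the computed $P$ negative before the method has a chance to relax to the positive steady state. I would verify the sign explicitly with a single clean numerical instance, and remark that the same failure persists in the matrix case by embedding the scalar example (e.g.\ taking the matrices to be scalar multiples of the identity, which reduces the block system to decoupled scalar Riccati equations).

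The main obstacle I anticipate is not the computation itself but making the counterexample \emph{robust}: I must ensure that the chosen matrices genuinely satisfy the standing hypotheses (\,$S,Q$ symmetric positive definite, $R$ positive definite\,) so that the \emph{exact} solution is positive definite, thereby isolating the loss of positivity as a defect of the numerical scheme rather than of the problem data. A secondary subtlety is that ``unconditionally'' is a claim about \emph{all} $h>0$; to disprove it I only need one bad $(A,S,Q,h)$, but I should present the offending $h$-range transparently so the reader sees that arbitrarily small $h$ is fine (consistent with the later higher-order ``conditionally positivity preserving'' results) while an intermediate or large $h$ fails. I would close by noting that this contrast is precisely what motivates the exponential Magnus-based integrators introduced in the sequel.
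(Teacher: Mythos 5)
Your overall strategy---reduce to a scalar example and exhibit the loss of positivity by explicit computation---is the same as the paper's, but the concrete mechanism you propose fails, and the essential ingredient of the paper's counterexample is missing. With \emph{constant} coefficients the midpoint and trapezoidal steps applied to the linear system (\ref{identification}) both reduce to the Cayley map $\bigl(I-\tfrac{h}{2}M\bigr)^{-1}\bigl(I+\tfrac{h}{2}M\bigr)$, and in the scalar case, with $M=\left[\begin{smallmatrix} a & q\\ s & -a\end{smallmatrix}\right]$, $q,s>0$, one step from $(v_0,w_0)=(0,1)$ gives
\[
 p_1=\frac{v_1}{w_1}=\frac{hq}{\bigl(1-\tfrac{ha}{2}\bigr)^{2}+\tfrac{h^{2}qs}{4}}>0
 \qquad\text{for every } h>0:
\]
the determinant $1-\tfrac{h^{2}}{4}(a^{2}+qs)$, whose sign change you want to exploit, multiplies $v_1$ and $w_1$ equally and cancels in $VW^{-1}$. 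Moreover, $\bigl(I-\tfrac{h}{2}M\bigr)^{-1}\bigl(I+\tfrac{h}{2}M\bigr)$ is proportional to $\bigl(I+\tfrac{h}{2}M\bigr)^{2}$, a matrix with all entries positive, so as a M\"obius map it sends $p\ge 0$ to $p>0$ and \emph{every} iterate stays positive. The scalar autonomous problem is therefore unconditionally positivity preserving for these schemes, and your proposed embedding by scalar multiples of the identity inherits this; no autonomous scalar counterexample of the kind you describe exists.

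The missing idea is non-autonomy. The paper's proof takes the scalar Riccati equation $\dot p=-q-2a(t)p+sp^{2}$, $p(t_f)=0$, and lets $a(t)$ switch from $0$ on the first backward step to a large negative constant $a$ on the subsequent ones; then two midpoint steps already give $p_2<0$ once $|a|>2/h$, and three trapezoidal steps suffice under an analogous condition. Two further discrepancies are worth noting. First, midpoint and trapezoidal coincide only for autonomous $M$; in the time-dependent setting they differ (evaluation at $t_n+h/2$ versus a split between $t_n$ and $t_{n+1}$), which is exactly why the paper must treat them separately and needs a different number of steps for each. Second, the paper applies the Runge--Kutta schemes to the Riccati ODE itself, so each implicit stage is a quadratic equation, which is how the observed complex-valued iterates arise. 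If you wish to salvage your route through the linearized system, you would likewise need $M(t_n)\neq M(t_{n+1})$ so that the product $\bigl(I-\tfrac{h}{2}M(t_{n+1})\bigr)^{-1}\bigl(I+\tfrac{h}{2}M(t_n)\bigr)$ loses the entrywise positivity above; a large negative $a$ entering only one of the two factors does exactly that. Finally, be careful with your closing picture of the failure region: the paper observes that \emph{larger} time steps can be \emph{less} prone to negativity, so the defect is not simply a matter of pushing $h$ large.
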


\begin{proof}
 It suffices to prove it for the scalar non-autonomous problem
\begin{equation}\label{eq:scalar}
  \dot p = -q-2a(t)\, p + s\, p^2, \qquad  \quad p(t_f)=0
\end{equation}
with $q,s>0$ and $a:[0,t_f]\to \mathbb{R}$.
Let $a(t)=0$ for $t\in[t_f-h,t_f]$ and $a(t)=a<0$ for $t\in[0,t_f-3h/2]$.
Then, for the implicit midpoint rule, two iterations backwards in time starting with $p_0=0$ yield a negative value $p_2<0$ if $|a|>2/h$.
For the trapezoidal rule, three iterations are necessary to reach negative values, with a sufficient condition for $p_3<0$ being
$$a< -\frac{3}{h}+h q \left(-\frac{1}{4}+\frac{6}{-4+h^2 q}\right) \quad\wedge\quad h<\frac{2}{\sqrt{q}}.$$

We remark, that, given $a<0$, the method produces negative values $p_3$ for a range of time-steps, i.e., for larger time-steps $h$, it is less prone to negativity.
Furthermore, not only can the methods produce negative values $p_k$, for certain parameter ranges they also attain complex valued results.
\end{proof}
\begin{figure}\centering
\includegraphics[width=\textwidth]{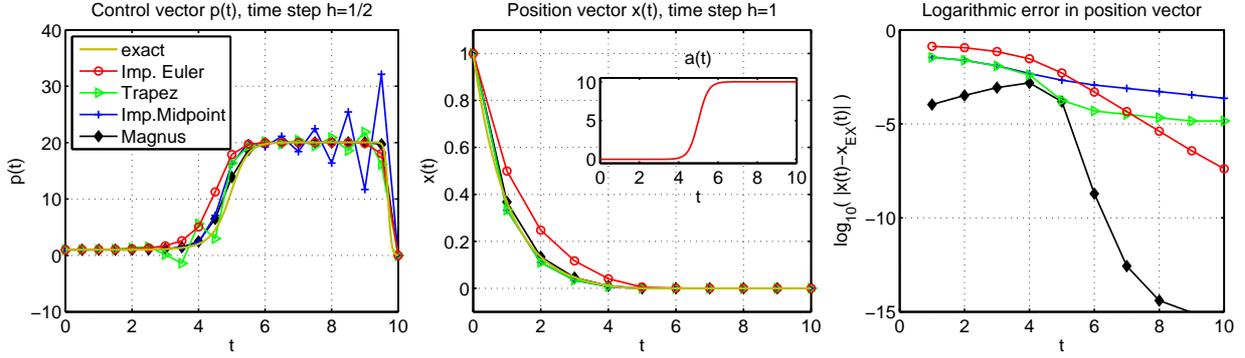}
\caption{\label{fig:comparison_scalar} (color online) Exact and
numerical solutions for the problem (\ref{eq:scalar}) with
parameters given in  (\ref{eq:scalarParameters}) and using the
implicit Euler, trapezoidal and midpoint Runge-Kutta methods as
well as the second order Magnus integrator.
}
\end{figure}

To better illustrate the results, we consider the problem
(\ref{eq:scalar}) for
\begin{equation}\label{eq:scalarParameters}
  q=s=1, \qquad a(t)=\frac{10}{1+\exp(-4(t-t_f/2))},
 \qquad t_f=10,
\end{equation}
with  $h=1/2$ for the equation of $p(t)$ and  $h=1$ for the
equation of $x(t)$,
$$
\dot x = \left(a\left(t\right)- s p\left(t\right)\right) x,\qquad  x(0)=0.
$$
We integrated the problem using the second
order implicit trapezoidal and midpoint methods as well as the
first order implicit Euler method. The results are shown in
Figure~\ref{fig:comparison_scalar}, where we appreciate that the
first order implicit Euler method is superior (the results obtained with the second order Magnus integrator to be presented in the next section is also included). 
The poor performance and non-positivity of the higher order standard
implicit methods is manifest.

If we are interested in high order numerical integrators,
different classes of methods have to be explored. We consider a
particular class of exponential integrators referred to as Magnus
integrators (see \cite{report} and references therein).

\subsection{Magnus integrators}\label{section_Magnus}

Given the general linear equation
\begin{equation}\label{eq:LinGeneral}
 y' = M(t) \, y \, , \qquad \quad y(t_0) = y_0 \,,
\end{equation}
with $y\in\mathbb{R}^p$, and if we denote the fundamental solution by $ \Phi(t,t_{0}) \in \mathbb{R}^{p \times p} $, such that
 $%
 y(t) = \Phi (t,t_{0}) y (t_{0}), %
 $
the Magnus expansion gives us the formal solution (under certain convergence conditions, see \cite{report} and references therein) as
\[
  \Phi (t,t_{0}) = \exp\left(\Omega(t,t_{0})  \right),
\]
where $\Omega(t,t_{0})=\sum_{n=1}^{\infty}\Omega_n(t,t_{0})$ and each $\Omega_n(t,t_{0})$ is an element of the Lie algebra generated by $n$-dimensional integrals involving $n-1$ nested commutators of $M(t)$ at different instants of time.
The first two terms are given by
\begin{displaymath}
  \Omega_1(t,t_{0}) = \int_{t_0}^{t} M(s) \, ds, \qquad
  \Omega_2(t,t_{0}) = \frac12 \int_{t_0}^{t}  \, dt_1 \int_{t_0}^{t_1}
  [M(t_1),M(t_2)] \, dt_2,
\end{displaymath}
where $[A,B]=AB-BA$.

In the region of convergence of the Magnus expansion, the exact solution at time $t=t_0+h$ is equivalent to the exact solution of the autonomous linear equation
\begin{displaymath}
 y' = \frac{1}{h} \Omega(t_0+h,t_{0}) \, y \, ,
 \qquad \quad y(t_0) = y_0 \, .
\end{displaymath}
It is well known that the set of matrices
\begin{equation}\label{eq:Symplectic}
 \left[ \begin{array}{cc}
  A & B \\ C & -A^T
 \end{array} \right],
\end{equation}
with $A,B,C\in\mathbb{R}^{n\times n}$ and $B=B^T, \ C=C^T$ form
the algebra of symplectic matrices.
This algebraic property is preserved by the commutators and then any truncated Magnus expansion preserves symplecticity for this problem.
However, the additional property about the positivity (or negativity) of the skew diagonal matrices $B,C$ is not always guaranteed when the series is truncated. We analyze lower order methods and show that it is possible to build second order Magnus integrators which unconditionally preserve positivity.

The first term in the expansion applied to \eqref{identification} does not contain commutators and is given by
\begin{equation}\label{eq:Magnus1}
  \Omega_1(t,t_{0}) %
   = \begin{bmatrix}
\displaystyle      -\int_{t_0}^{t}A(s)^{T} \, ds  & \displaystyle - \int_{t_0}^{t}Q(s) \, ds \\ & \\
\displaystyle      - \int_{t_0}^{t}S(s) \, ds  &   \displaystyle \int_{t_0}^{t}A(s) \, ds
    \end{bmatrix}\,.
\end{equation}

Then, if we truncate the series after the first term and approximate the integrals for a time interval $t\in[t_0,t_0+h]$ using a quadrature rule of second or higher order, we obtain a second order method.

It is well known that, if $Q(t)$ is a symmetric positive definite
matrix for $t\in[t_0,t_0+h]$, then
$ \ %
\hat Q_h=\int_{t_0}^{t_0+h}Q(s) \, ds \ $
is also symmetric  positive definite.
Suppose now that the integral is approximated using a quadrature rule
\[
 \tilde Q_h\equiv h \sum_{i=1}^k b_i Q(t_0+c_ih) \simeq
 \int_{t_0}^{t_0+h}Q(s) \, ds,
\]
with $\ c_i\in[0,1], \ i=1,\ldots,k$. If $ \ \sum_ib_i>0 \, $, we have:
\begin{description}
  \item[a)] If $b_i>0, \ i=1,\ldots,k$, then $\tilde Q_h$ is a
  symmetric positive definite matrix.
  \item[b)] If  $\exists \, b_j<0, \ $ for some value of $j$ and
  $\|Q(t_m)-Q(t_n)\|<C|t_m-t_n|, \ \forall t_m,t_n\in[t_0,t_0+h]$,
  then $\exists \ h^*>0$ such that $\tilde Q_h$ is a
  symmetric positive definite matrix for $0<h<h^*$, and $h^*$ depends on $C$.
\end{description}
The same results also apply to $\tilde S_h$.

A second order method which preserves positivity is constructed by taking the first term in the Magnus expansion (\ref{eq:Magnus1}) and approximating the integrals by a second or higher order rule with the constraint that all $b_i>0$. 
The most natural choices are the midpoint rule
\begin{displaymath}
  \Psi_h^{[2]} =  \exp \left( h  M(t+h/2) \right)
    = \Phi(t+h,t) + \mathcal{O}(h^3),
\end{displaymath}
or the trapezoidal rule
\begin{equation}\label{Trapezoidal}
  \Psi_h^{[2]} =  \exp \left( \frac{h}{2} \left[ M(t+h) + M(t) \right] \right)
    = \Phi(t+h,t) + \mathcal{O}(h^3).
\end{equation}
The latter of which is found more efficient since less evaluations
of the functions in the algorithm are necessary as they can be reused in the computation of $X(t)$. If we consider
the RDE \eqref{eq:RDE} that corresponds to (\ref{eq:LinGeneral})
with the data (\ref{identification}) and choose an equidistant
time grid $t_{n} = t_{0} + nh \, $, $ \ 0 \leq n \leq N \, $, with
constant time step $ \, h=(t_{f}-t_{0})/N \, $
and taking into account that this equation has to
be solved backward in time, we obtain
\begin{displaymath}
 \left[ \begin{array}{c} V_{n} \\ W_{n}
 \end{array} \right] = \exp\left( - \frac{h}{2} \left[ M(t_{n}) + M(t_{n+1}) \right]  \right)
 \left[ \begin{array}{c} V_{n+1} \\ W_{n+1}
 \end{array} \right] \ \Rightarrow \
 \tilde P_{n}= V_{n}\,W_{n}^{-1} \, ,
\end{displaymath}
By construction, $\tilde P_{n}$ is a symmetric positive definite
matrix. In addition, it is also a time symmetric second order
approximation to $P(t_{n})$.
In this way, the matrix functions $A(t_{n})$, $B(t_{n})$,
$Q(t_{n})$, $R(t_{n})$ are computed at the same mesh points as the
approximations $\tilde P_h$ of $P(t)$ and, as we will
see, they can be reused for the forward time integration of the
state vector.

Let us consider the equation for the state vector, to be integrated forward in time, which takes the form
\begin{displaymath}
  \dot X = (A(t)-S(t)\tilde P_h) X,
\end{displaymath}
where we denote by $\tilde P_h$ the numerical approximations to $P(t)$ computed on the mesh and  $\tilde P_{h,n}\simeq P(t_n)$.
Notice that at the instant $t=t_f-h$, we have that $\tilde P_{h,N-1}=P(t_f-h)+\mathcal{O}(h^3)$ (local error) but at $t=t_0$, after $N$ steps, we have $\tilde P_{h,0}=P(t_0)+\mathcal{O}(h^2)$ (global error).
This accuracy suffices to get a second order approximation for the numerical approximation to the state vector.

If we use the same Magnus expansion for the numerical integration of the state vector with the trapezoidal rule, we have the algorithm
\begin{equation*}\label{Xn}
        X_{n+1} = \exp \left(
            \frac{h}{2} \left[
        D_{n+1} + D_{n}
         \right] \right) \, X_{n} \, ,
  \ \quad     D_{m} = A_{m} - S_{m} \tilde P_{h,m},
   \quad  m=n,n+1,
\end{equation*}
where $A_{m}=A(t_{m})$, $S_{m}=S(t_{m})$.

Finally, the controls which allow us to reach the final state in a
nearly optimal way are computed via
 \[
 u_n \,  = \, - R^{-1} (t_n) B^{T} (t_n)  P_n \, X_n \, .
 \]

\paragraph{Higher order Magnus integrators}

Truncating the Magnus expansion at higher powers of $h$ usually requires the computation of matrix commutators.
If we include, for example, the second term $\Omega_2$ in the exponent, we obtain $\Psi_h\equiv\exp\left(\Omega_1+\Omega_2\right)$, which agrees with the exact solution up to order four, i.e., $\Psi_h=\Phi (t+h,t)+ \mathcal{O}(h^5)$.
The sum $\Omega_1+\Omega_2$ belongs to the algebra of symplectic
matrices, as given in (\ref{eq:Symplectic}), where the
off-diagonal matrices $B,C$
take an involved form.
We will show that positivity is conditionally preserved, however, unconditional preservation as for $\exp\left(\Omega_1\right)$ cannot be achieved.

For simplicity in the analysis, we consider commutator-free Magnus
integrators (see \cite{report,blanes06fas} and references
therein). With the abbreviations
\begin{displaymath}
  M^{(0)}= \int_{t_n}^{t_n+h} M(s) \, ds, \qquad
  M^{(1)}= \frac{1}{h} \int_{t_n}^{t_n+h}
  \left(s-\left(t_n+\frac{h}{2}\right)\right) M(s) \, ds,
\end{displaymath}
the following commutator-free composition gives an approximation to fourth-order
\begin{displaymath}
 \Psi_{CF}^{[4]} = \exp\left(\frac12 M^{(0)}+2M^{(1)}  \right) \,
  \exp\left(\frac12 M^{(0)}-2M^{(1)}  \right) =
  \Phi(t_0+h,t_0) + \mathcal{O}(h^5).
\end{displaymath}
Using the fourth-order Gaussian quadrature rule to approximate the integrals yields
\begin{displaymath}
 \Psi_{G}^{[4]} = \exp \left(  h (\beta M_1 + \alpha M_2) \right)
 \exp \left(  h (\alpha M_1 + \beta M_2) \right),
\end{displaymath}
where $M_i\equiv M(t_n+c_ih), \ i=1,2$,
$c_1=\frac12-\frac{\sqrt{3}}{6}, \
c_2=\frac12+\frac{\sqrt{3}}{6}$,
$\alpha=\frac14-\frac{\sqrt{3}}{6}=-0.038\ldots<0, \
\beta=\frac14+\frac{\sqrt{3}}{6}$. This composition will not
preserve positivity unconditionally when applied to solve the RDE
because $\alpha<0$. However, since $\alpha+\beta=\frac12$ the
positivity will be conditionally preserved.

If we approximate the integrals using the Simpson rule, we have
\begin{displaymath}
 \Psi_{G}^{[4]} = \exp \left(  \frac{h}{12} (-M_1 + 4M_2 + 3M_3) \right)
 \exp \left(  \frac{h}{12} (3M_1 + 4M_2 - M_3) \right),
\end{displaymath}
where $M_1\equiv M(t_n), M_2\equiv M(t_n+h/2), M_3\equiv M(t_n+h)$.
As previously, one of the coefficients is negative and positivity is not unconditionally preserved when the method is applied to the RDE.

Recall that the full problem requires the solution of two differential equations; suppose we want to (backward) integrate the matrix RDE with one of the fourth-order commutator-free methods and then use the same method for the (forward) integration of the state vector, we need to use a time step twice as large for the forward integration (preferably with the Simpson rule, since no interpolation is necessary).

The main goal of this paper is to present a simple, fast, accurate and reliable numerical scheme for nonlinear problems.
As we will see, nonlinear problems are linearized, and the resulting linear equations are solved iteratively.
The solution of each iteration is plugged into the following iteration, and this requires to use a fixed mesh for all methods.
For this reason, the second order Magnus integrator is the optimal candidate among the previous and is used in the numerical examples in section~\ref{quadrotor}.

\section{The nonlinear control problem}\label{nonlinear}

Many problems in engineering can be stated as optimal control problems of the form
\begin{subequations}\label{eq:nonlin}
\begin{align}\label{eq:nonlin:cost}
    \underset{u\in L^2}{\min} \int_0^{t_f}
            \left( X^{T}(t)Q(t,X(t)) X(t) + u^{T}(t) R(t,X(t))u(t) \right) \, dt
    \\ \label{eq:nonlin:x}
    \text{subject to} \quad
            \dot X(t) = f_A\left(t,X\left(t\right)\right)
                                + f_B\left(t,X\left(t\right),u\left(t\right)\right),
                                 \quad X(0) = X_0.
\end{align}
\end{subequations}

This nonlinear optimal control problem is considerably more involved than its linear counterpart.
It is then usual to solve the nonlinear problem by linearization, and this can be done in different ways.
In the following, under the assumption that $f_B$ depends linearly on $u$, we present three of them and compare their performances when the linear equations are solved using exponential integrators.

\paragraph{Quasilinearization}
For $f_A(t,0)=0$ and $f_B(t,X,u)\neq 0$ for all $t,X$ in the
appropriate domains, the state equation \eqref{eq:nonlin:x} can be
written in a non-unique way as 
\beq\label{eq:nonlin:lin}
    \dot X(t) = A(t,X)X(t) + B(t,X)u(t), \quad X(0)=X_0.
\eeq

The formulation \eqref{eq:nonlin:lin} is the basic ingredient for
the State Dependent Riccati Equation (SDRE) control technique
\cite{cimen08sdr,Cloutier97}. Its formal similarity to the
linear problem \eqref{eq:lqproblem} motivates the imitation of the
optimal LQ controller by defining
\begin{subequations}\label{eq:sdre}
\beq\label{eq:sdre:u}
    u(t) = - R^{-1}(t) B^T(t,X(t))P(t,X) X(t)
\eeq where $P(t,X)$ solves the now state-dependent algebraic
Riccati equation \beq\label{eq:sdre:are}
    0 = - P A(t,X) - A(t,X)^TP + P B(t,X)R(t,X)^{-1}B(t,X)^T P - Q(t,X).
\eeq 
One has to choose the unique positive definite solution of
the algebraic Riccati equation and, combining \eqref{eq:sdre:u}
with \eqref{eq:nonlin:lin}, the closed-loop nonlinear dynamics are
given by \beq\label{eq:x:sdre:nonlin}
  \dot X = \left( A(t,X)-B(t,X)R(t,X)^{-1}B(t,X)^TP(t,X) \right) X,
                \quad X(0)=X_0.
\eeq
\end{subequations}
The usual approach is to start from $X(0)=X_0$, and then to
advance step by step in time by first computing $P$ from
\eqref{eq:sdre:are} at each step and then applying the Forward
Euler method on \eqref{eq:x:sdre:nonlin}. The application of
higher order methods, such as Runge-Kutta schemes, requires to
solve implicit systems with \eqref{eq:sdre:are} and can thus be
costly. In addition, if one is interested in aggressive
trajectories, the algebraic equation (\ref{eq:sdre:are}) can
considerably differ from the solution of the corresponding Riccati
differential equation, which affects the solution of the state
vector, $X$, and ultimately the control in \eqref{eq:sdre:u}.

\paragraph{Waveform relaxation}
Alternatively, we can linearize \eqref{eq:x:sdre:nonlin}, by
iterating
 \beq\label{eq:x:mixed:iteration}
    \frac{d}{dt} X^{n+1} = \left(A(t,X^n)-B(t,X^n)R(t,X^n)^{-1}B(t,X^n)^TP(t,X^n)
                                \right) X^{n+1}.
\eeq
We start with a guess solution $X^0(t)$ and iteratively obtain a
sequence of solutions, $X^1(t)$, $X^2(t),\ldots,X^n(t)$. 
Again, the iteration stops once
consecutive solutions differ by less than a given tolerance.
Here, $P(t,X^n(t))$ at each iteration is obtained from
\beq\label{eq:rde}
    \dot P = - PA^n(t) - A^n(t)^TP + P B^n(t)R^n(t)^{-1}B^n(t)^TP -
    Q^n(t),\quad P(t_f)=0,
\eeq with $A^n(t) \equiv A(t,X^n(t))$, $B^n(t)\equiv B(t,X^n(t))$,
etc.

This procedure is similar to what is known as waveform relaxation
\cite{White85}, however, the backward integration for $P$ limits
the parallelizability in this application. This approach
corresponds to freezing the nonlinear parts in
\eqref{eq:nonlin:lin} at the previous state and then applying the
optimal control law \eqref{eq:linear:opt_control}. It is worth
noting that this technique can handle inhomogeneities by slightly
adapting the control law, at the cost of solving an inhomogeneous
linear system, see below. The algorithms are illustrated in
Table~\ref{algorithm:linearization}.

\begin{table}[h!]
\centering
\hspace{-2mm}
 {\small\begin{tabular}{l|l}
  \hline
  {\bf A1: waveform relaxation}
  &
  {\bf A2: linearization}\\
  \hline
  \rule[1mm]{0mm}{2cm}$%
  \begin{array}{l}
        n := 0; \quad \textit{guess}: X^0(t), u^0(t)\\
              {\bf do} \\
                \quad n := n+1\\
                    \quad \text{compute}:%
                                    A^{n-1}(t), B^{n-1}(t)\\
                  \quad \text{solve}\, (t_f\to0):
                        \text{eq. }\eqref{eq:rde} \textit{ for } P^{n-1}\\
                    \quad \text{solve}\, (0\to t_f):
                            \text{eq. }\eqref{eq:x:mixed:iteration}
                                                                \textit{ for } X^{n}\\
        {\bf while}\ |X^{n}-X^{n-1}|>tolerance  \\[1mm]
        \textit{Check for feasibility of }X^{n}\\
                \\
   \end{array}
   $
   &
   $
   \begin{array}{l}
        n := 0; \quad \textit{guess}: X^0(t), u^0(t)\\
          {\bf do} \\
                \quad n := n+1 \\
                                    \quad \text{compute}:%
                                    \bar A^{n-1}(t), \bar B^{n-1}(t), \bar C^{n-1}(t)\\
                  \quad \text{solve}\, (t_f\to0):
                        \text{eq. }\eqref{eq:RDE} \textit{ for } P^{n-1}\\
                  \quad \text{solve}\, (t_f\to0):
                        \text{eq. }\eqref{eq:disturbance} \textit{ for } V^{n}\\
                    \quad \text{solve}\, (0\to t_f):
                            \text{eq. }\eqref{eq:linearized:inhomogeneous}
                                                                \textit{ for } X^{n}\\
        {\bf while} \  |X^{n}-X^{n-1}|>tolerance \\[1mm]
        \textit{Check for feasibility of }X^{n}\\
      \end{array}
    $
    \\ \hline
 \end{tabular}
}
  \caption{Algorithm (A1) for the waveform relaxation
         and algorithm (A2) for the Taylor-type linearization.
         \label{algorithm:linearization}
         }
\end{table}

\paragraph{Taylor-type linearization}
Similarly to \cite{ponsoda11nen}, 
we can Taylor-expand the vector field in \eqref{eq:nonlin:x} around an approximate solution $X^n(t)$ and use optimal LQ controls for the approximated equation.
The iteration step reads then
\beq\label{eq:linearized:inhomogeneous}
    \dot X^{n+1}(t) =    \bar A^n(t) X^{n+1}(t)
                                        +  \bar B^n(t) u^{n+1}(t) + \bar C^n(t),
\eeq where
\begin{displaymath}
\begin{array}{rl}
    \bar A^n(t)&=D_Xf_A\left(t,X^n(t)\right) +
                                         D_Xf_B\left(t,X^n(t),u^n(t)\right),\\
    \bar B^n(t)&=D_Uf_B(t,X^n,U^n),\\
    \bar C^n(t) &=
    f_A(t,X^n)+f_B(t,X^n,u^n) -
    \left(
                \bar A^n(t)
                \cdot X^n +
                                 \bar B^n(t)
                \cdot u^n  \right),
                \end{array}
\end{displaymath}
and $D_X$ denotes the derivative with respect to $X$, etc. One
starts with an initial guess, $X^0(t)$ and the iteration stops
once consecutive iterations differ by less than a given tolerance.

The inhomogeneity $\bar C^n$ can be treated as a disturbance input
and compensated by the controller \cite{Bryson75}. The optimal
control then becomes
\begin{displaymath}
    u^{n+1}(t) = - {R^n(t)}^{-1} {\bar B^n(t)}^{T}
                                    \left(P^n(t) X^{n+1}(t)  + V^{n}(t)\right)
\end{displaymath}
where $P^n(t)$ satisfies \eqref{eq:RDE} with replacements $A\to
\bar A^n$ and $B\to \bar B^n$, etc. and $V^n(t)$ is given by
\begin{align}
    \label{eq:disturbance}
    \dot V & = \left( P\bar B R^{-1} {\bar B}^{T} - {\bar A}^{T} \right) V
                                    - P\bar C, \quad V(t_f)=0
\end{align}
at each iteration.
The linearization procedure is summarized in Table
\ref{algorithm:linearization}.

\vspace*{.4cm}

\noindent NOTE: We can solve non-homogeneous equations with Magnus
integrators as follows. Given the non-homogeneous equation
\begin{displaymath}
 y' = M(t) \, y \, + C(t) , \qquad \quad y(t_0) = y_0 \, ;
\end{displaymath}
it can be formulated as a homogeneous one in the following way
\cite{applnum},
\begin{displaymath}
 \frac{d}{dt} \begin{bmatrix} y \\ 1 \end{bmatrix}
    \ = \ \begin{bmatrix} M(t) & C(t) \\
                    0_n^T        & 0
         \end{bmatrix}\,
\begin{bmatrix} y \\ 1 \end{bmatrix}, \quad
[y(0),1]^T=[y_0,1]^T,
\end{displaymath}
where $0_n^T=[0,\ldots,0]\in\mathbb{R}^n$.

\section{Modeling the control of a quadrotor UAV
}\label{quadrotor}

The optimal control of Unmanned Aireal Vehicles (UAV) has attracted great attention in recent years \cite{budiyono07otc,castillo}.
Helicopters are classified as Vertical Take Off Landing (VTOL) aircraft and are among the most complex flying objects because their flight dynamics are nonlinear and their variables are strongly coupled.

In this section, we address the optimal control of a quadrotor, i.e., a vehicle with four propellers, whose rotational speeds are
independent, placed around a main body \cite{tesis,ieee,castillo04rts,castillo,cow}.
Linear techniques to control the system have been frequently used.
The controllers are designed based on a simplified description of the system behavior (linearized models).
While this is satisfactory at hover and low velocities, it does not predict correctly the system behavior during fast maneuvers (most
controllers are specifically designed for low velocities) and in order to improve the performance, the nonlinear nature of the quadrotor has to be taken into account \cite{cimen08sdr,voos06nsd}.
In addition, problems can have time-varying parameters \cite{zhang10ato} or require time-dependent state references \cite{cow}.

Under realistic conditions, real time calculations are necessary since the optimal control will have to adjust to environmental changes, that are not accounted for in the model, and hence more efficient and elaborated algorithms have to be designed.

LQ optimal controllers are widely used, in particular
for the control of small aircrafts \cite{tesis, voos06nsd},
where they have shown to produce better results
than other standard methods, like
proportional integral derivative methods (PID) \cite{ieee}.
The techniques presented here, however, are valid for the general
optimal LQ control problem (\ref{eq:lqproblem}).

For the illustration of our methods, we consider a VTOL quadrotor,
based on the model presented in \cite{castillo,voos06nsd} (and
references therein). Figure \ref{fig:quadrotor} describes the
configuration of the system, where $ \phi $, $ \theta $ and $ \psi
$ denote the rolling, pitching and yawing angles, respectively.
\begin{figure}[!ht]
    \begin{center}
      \input{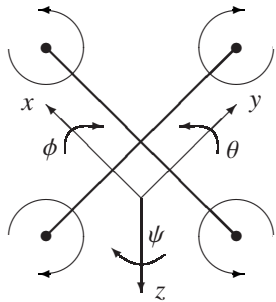}
    \end{center}
    \caption{{\small{Quadrotor schematic}}}
    \label{fig:quadrotor}
\end{figure}
We assume some standard general conditions on the symmetric and rigid structure of the flying robot: the center of mass is in the center of the planar quadrotor and the propellers are rigid.

We remark that inhomogeneities
$f_A(t,0)=b(t)$, e.g., from gravitational forces, can be treated as
disturbances, by adding new state variables or by taking advantage
of non-vanishing states, e.g., the altitude of the UAV when hover
is searched \cite{cimen08sdr}. 

An analysis of the dynamics of the quadrotor shows that the
control of the attitude can be separated from the translation of
the UAV \cite{voos06nsd} and we focus our attention on the
stabilization of the attitude, neglecting the gyroscopic effect.
The state vector is given by
\begin{displaymath}
X(t)  =  \left(\phi(t), \dot\phi(t), \theta(t), \dot\theta(t), \psi(t), \dot\psi(t)\right)^T \in \mathbb{R}^{6}  ,
\end{displaymath}
and the input vector $u \in \mathbb{R}^{3}$
is formed by linear combinations of the thrust of each propeller.

The system designer can choose the weight matrices to tune the
behavior of the control according to the requirements, $R(t)$ is
used to suppress certain movements and $Q(t)$ limits the use of
the control inputs. Usually, these matrices are chosen constant,
positive definite and often even diagonal, see \cite[p.
67]{tesis}, \cite{castillo,cow}. For the numerical experiments, we
have implemented the problem (\ref{eq:nonlin}) with the following
values taken from \cite{BouabdallahTesis,voos06nsd}
\beq\begin{array}{lll}\label{eq:def:AyB}
a_{1,2}=a_{3,4}=a_{5,6}=1,& a_{2,4}=\lambda\alpha_1 I_1 \dot\psi,&
a_{2,6}=\lambda(1-\alpha_1) I_1 \dot\theta
\\
a_{4,2}=\lambda\alpha_2 I_2 \dot\psi,&
a_{4,6}=\lambda(1-\alpha_2) I_2 \dot\phi,&
a_{6,2}=\lambda\alpha_3 I_3 \dot\theta,\\
a_{6,4}=\lambda(1-\alpha_3)I_3 \dot\phi,& b_{2,1} = L/I_x, \ \
b_{4,2} = L/I_y,& b_{6,3} = 1/I_z
\end{array}
\eeq where $\alpha_i$ reflects the non-uniqueness in the SDRE
formulation, $\lambda$ denotes the inflow ratio, $L$ is the length of the arms connecting the
propellers with the center and the relative moments of inertia are $I_1=(I_y-I_z)/I_x$,
$I_2=(I_z-I_x)/I_y$, $I_3=(I_x-I_y)/I_z$. Here, $\, m_{i,j} \, $
denotes the element located at $i$-th row and $j$-th column of the
matrix $M$. Other entries of $A(t) \in \mathbb{R}^{6 \times 6}$
and $B(t) \in \mathbb{R}^{6 \times 3}$ not indicated in
\eqref{eq:def:AyB} are null elements.

The numerical values are extracted from \cite{BouabdallahTesis} and
are given in the SI units
\begin{displaymath}
I_{x} = 0.0075, \quad I_{y} = 0.0075,  \quad  I_{z} = 0.0130,
\quad  L = 0.23, \quad  \lambda = 1, \quad \alpha_{i}=1.
\end{displaymath}
The weight matrices are fixed at
\begin{displaymath}
Q = 0.01\cdot\text{diag} \{ \, 1, 0.1 , 1, 0.1, 1, 0.1\}\in \mathbb{R}^{6 \times 6},\quad
R = \text{diag} \{1, 0.1, 1 \} \in \mathbb{R}^{3 \times 3}.
\end{displaymath}
We set the time frame to $t_{f}=10$ seconds, with a stepsize of $h=0.125s$
and initial state
\begin{displaymath}
X_{0}  =  \left(70\degree,\, 10,\, 70\degree,\, 20,\, -130\degree\,, -1\right)^{T},
\end{displaymath}
that corresponds to a disadvantageous orientation and high
rotational velocities that are sought to be stabilized at
$0\in\R^6$ at the final time $t_{f}$.

We have implemented a variety of methods  to test against the
Magnus integrators presented in section \ref{section_Magnus}.
As initial condition, we have taken $X^0(t) = (1-t/t_f)X_0$ and
the iteration was stopped when $||X^n-X^{n-1}||_2<10^{-3}$. We use
the explicit and implicit Euler methods as well as the second
order Magnus integrator. Some experimental results are given in
Table~\ref{table:results}, where we can see that the Magnus based
method \eqref{Trapezoidal}, approximates the optimal control best.
However, we have to remark that the SDRE method is for the given
parameters about a factor ten faster, due to necessary iterations
for the other schemes.

\begin{table}[h]\centering
    \begin{tabular}{lllllcr}
        &Type&  $X(t)$ & $P(t)$ & $V(t)$ & Cost  & It.\\ \hline\hline
        \texttt{S1})&SDRE & Euler                               & are                                   & N/A   & 0.1114\\
        \texttt{S2})&           & Impl. Euler (IE)          & are                                   & N/A       & 0.1021\\
                          &         & Optimal                           &$\Rightarrow$                  &               & 0.0977\\ \hline
        \texttt{W1})&WAVE & Euler                               & Euler                                 & N/A   & 0.1071& 3\\
        \texttt{W2})&           & IE                                  & IE                                      & N/A       & 0.1036& 3\\
        \texttt{W3})&         & Magnus \eqref{Trapezoidal} & Magnus                              & N/A   & 0.0926& 3\\
                         &        & Optimal                       &$\Rightarrow$                & N/A   & 0.0888\\ \hline
     \texttt{T1})&TAYLOR& Euler                                 & Euler                                 & Euler & N/A    & Inf\\
     \texttt{T2})&          & IE                                    & IE                                        & IE        & 0.0789 & 12\\
     \texttt{T3})&          & Magnus                                & Magnus                                & Magnus& 0.0707 & 12\\
                         &          & Optimal                               &$\Rightarrow$                  &               & 0.0707\\ \hline
    \end{tabular}
    \caption{Comparison of numerical methods, Type indicates the linearization procedure given by section \ref{nonlinear} and It. denotes the number of iterations necessary until convergence. The cost is a discrete approximation of the integral in \eqref{eq:nonlin:cost}.
                    \label{table:results}}
\end{table}

Figure~\ref{figure1} shows the controls obtained for the schemes
\texttt{S2}, \texttt{W3}, \texttt{T3} and Figure~\ref{figure2}
shows the motion of the quadrotor angles subject to the controls. 
We can appreciate how the Magnus methods maximize the
use of the controls to reach an overall minimum of the cost
functional.

\begin{figure}[h!]
\begin{center}
\psfrag{S2}[][l]{\;\;\;\;\texttt{S2}}
\psfrag{W3}[][l]{\;\;\;\;\texttt{W3}}
\psfrag{T3}[][l]{\;\;\;\;\texttt{T3}}
\includegraphics[width=\textwidth]{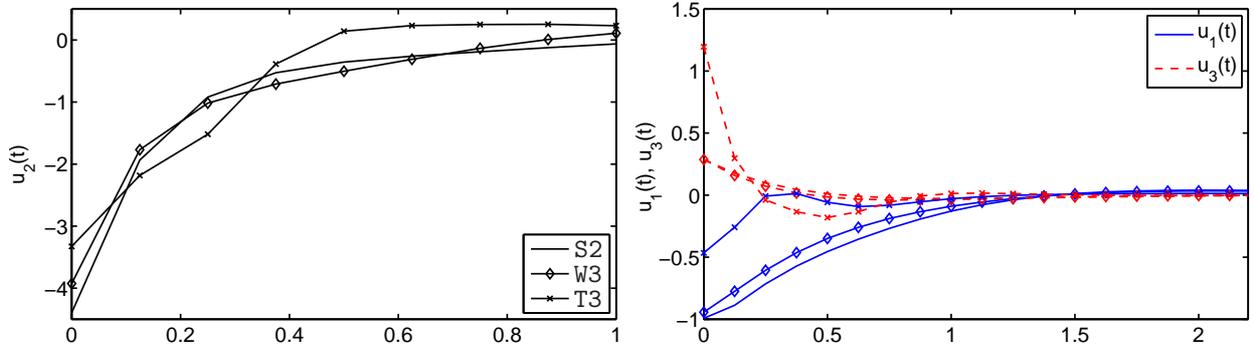}
\end{center}
\caption{\small{Evolution of the control vector. The left column
shows the control that has been least penalized $u_2$. All curves
are given for all methods \texttt{S2} (line), \texttt{W3}
(diamond) and \texttt{T3} (cross).
 \label{figure1}}}
\end{figure}

\begin{figure}[h!]
\begin{center}
\psfrag{S2}[][l]{\;\;\;\;\texttt{S2}}
\psfrag{W3}[][l]{\;\;\;\;\texttt{W3}}
\psfrag{T3}[][l]{\;\;\;\;\texttt{T3}}
\includegraphics[width=\textwidth]{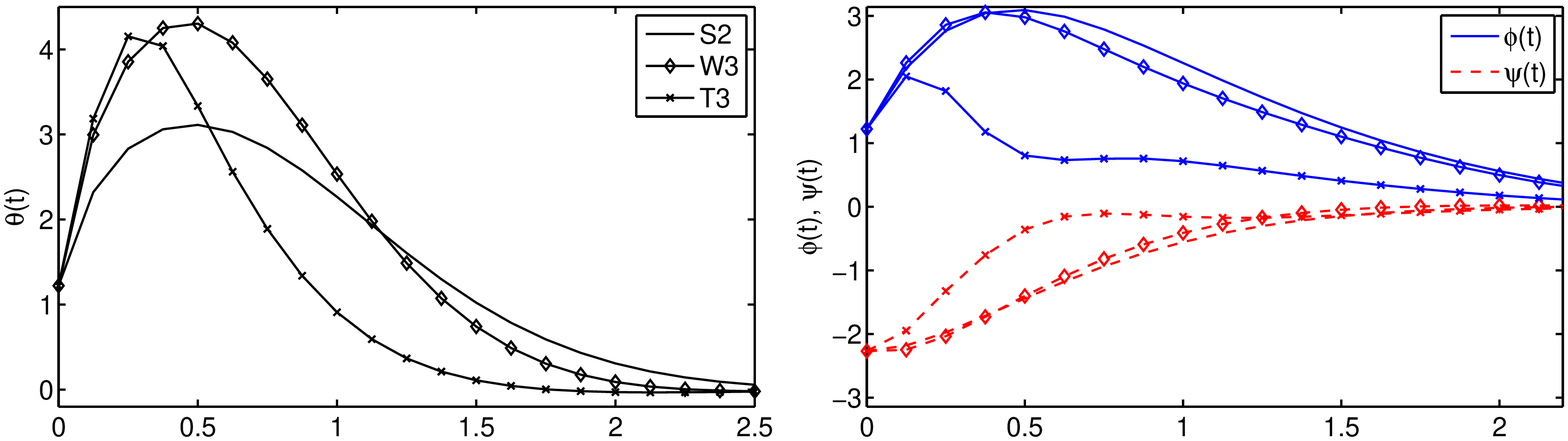}
\includegraphics[width=\textwidth]{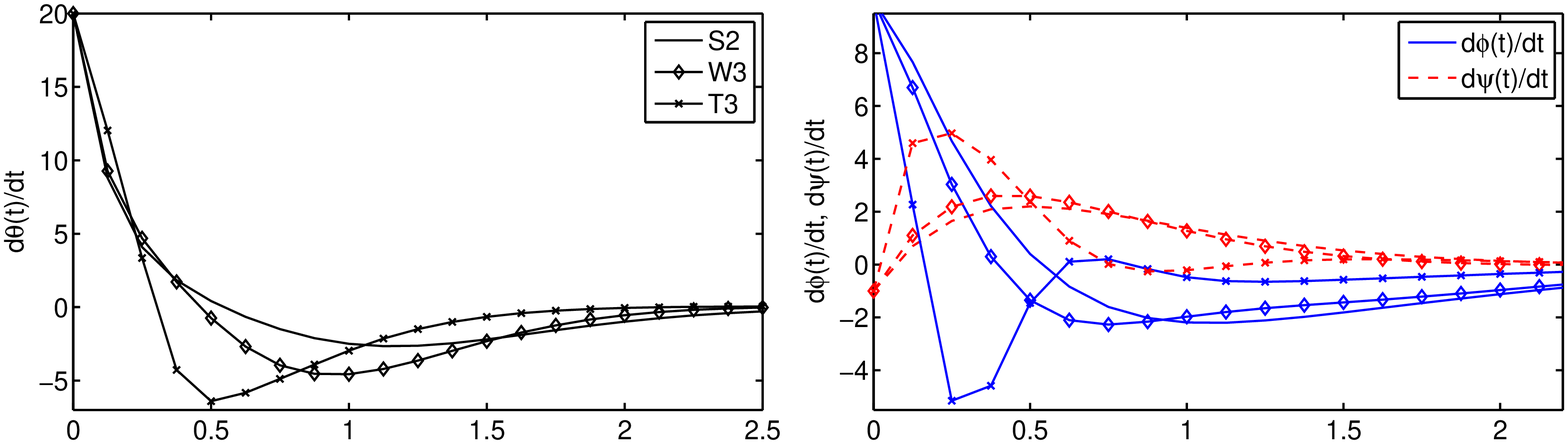}
\end{center}
\caption{\small{Evolution of the orientation of the quadrotor (top
row) and angular velocities (bottom). The left column shows the coordinates $\theta(t)$ and $\dot\theta(t)$, whereas the remaining coordinates $\phi(t), \dot\phi(t)$ and $\psi(t), \dot\psi(t)$ are depicted in the right column. All curves are
given for all methods \texttt{S2} (line), \texttt{W3} (diamond) and \texttt{T3} (cross).
 \label{figure2}}}
\end{figure}

From the numerical experiments we conclude that Lie group methods
such as Magnus integrators which preserve the positivity of the
solution of the matrix RDE are very useful tools for solving
optimal control problems of UAV.

\section{Conclusions}

We have presented structure preserving integrators based on the
Magnus expansion for solving linear quadratic optimal control
problems. The schemes considered require the numerical integration
of matrix RDEs whose solutions, for this class of problems, are
symmetric and positive definite matrices. The preservation of this
property is very important to obtain reliable and efficient
numerical integrators. While geometric integrators preserve most
of the qualitative properties of the exact solution, the
preservation of positivity for the matrix RDE is, in general, not
guaranteed. We have shown that some symmetric second order
exponential integrators (Magnus integrators) preserve this
property unconditionally and, in addition, are very appropriate to
build simple and efficient numerical algorithms for solving
nonlinear problems by linearization. The performance of the
methods is illustrated with an application to stabilize a quadrotor UAV. The results shown for a quadrotor easily extend to other helicopters.

For more involved trajectories, the structure of the equations
will play a more important role and the methods presented in this
work could be very useful in those cases. Additionally, in more
difficult settings, e.g., in the case of trajectory following or
obstacle avoidance, stronger time dependencies of the parameters
are expected, 
making standard methods more susceptible to instabilities, and thus, the advantages of the exponential methods are expected to be amplified. This tendency highlights these applications as interesting for further investigation.

\section*{Acknowledgments}

This work has been partially supported by Ministerio de Ciencia e
Innovaci\'on (Spain) under the coordinated project
MTM2010-18246-C03
(co-financed by the ERDF of
the European Union) and  MTM2009-08587, and the Universitat Polit\`{e}cnica de Val\`{e}ncia throughout
the project 2087.  P. Bader also acknowledges the support through the FPU
fellowship AP2009-1892.

\end{document}